\newtheorem{theorem}{Theorem}
\newdefinition{remark}{Remark}
\journal{Journal of Computational and Applied Mathematics}
\begin{document}

\begin{frontmatter}

\title{Numerical Integration as a Finite Matrix Approximation to
       Multiplication Operator} 

\author[add]{Juha Sarmavuori\corref{cor}}
\ead{juha.sarmavuori@aalto.fi}
\address[add]
        {
           Department of Electrical Engineering and Automation,
          Aalto University,
          P.O. Box 12200, 
          FI-00076 Aalto,
          Finland
        }
\cortext[cor]
        {
          Corresponding author
        }
\author[add]{Simo S\"arkk\"a}
\ead{simo.sarkka@aalto.fi}

\begin{abstract}
  In this article, numerical integration is formulated as evaluation of a matrix function of a matrix
  that is obtained as a projection of the multiplication operator on a finite-dimensional basis.
  The idea is to approximate the continuous spectral representation of a 
  multiplication operator on a Hilbert space with a discrete spectral 
  representation of a Hermitian matrix. The Gaussian quadrature is shown to be a special
  case of the new method. The placement of the nodes of numerical integration
  and convergence of the new method are studied.
\end{abstract}

\begin{keyword}
  numerical integration \sep
  multiplication operator \sep
  matrix function \sep
  Gaussian quadrature
  \MSC 65D30 \sep %
  65D32 \sep %
  65F60 \sep %
  47N40 \sep %
  47A58 %
\end{keyword}

\end{frontmatter}


%
\section{Introduction}
This article is concerned with numerical integration which is an important task that
arises in almost all fields of science and engineering. 
We develop a method for numerical integration for a situation, where
it is possible to decompose the integrand into an outer and inner function
$f(g(\boldsymbol{x}))$ and to find solutions to certain integrals involving the function
$g$. The solvable integrals are elements of an infinite 
matrix $\boldsymbol{M}$ that corresponds to the operation of multiplying with the function $g$. The numerical
integration then reduces to approximate computation of the matrix function $f(\boldsymbol{M})$.

The approach may seem complicated at the first sight, but we show
that it is feasible at least in some cases. In fact, in hindsight, it can be interpreted to have
been used for a couple of centuries in Gaussian quadrature rules where the inner function is simply $g(x)=x$ and
the matrix corresponding to the operation of multiplying with $g$ is the infinite
tridiagonal Jacobi matrix
\cite{Stone:1932,Akhiezer:1965,Gautschi:1996,Gautschi:2002,Gautschi:2004,Golub+Meurant:2009,Simon:2015}.
In that well known special case, the matrix approximation leads
to the Golub-Welsch algorithm \cite{Golub+Welsch:1969,Gautschi:2002}
and its variations \cite{Laurie:2001,Golub+Meurant:2009}.

Recently, this idea has been applied on integrals on the unit circle of the complex plane when the
basis functions are rational functions
\cite{Velazquez:2008, Cantero+Cruz-Barroso+Gonzalez-Vera:2008, Cruz-Barroso+Delvaux:2009,Bultheel+Cantero:2009,
Bultheel+Gonzalez-Vera+Hendriksen+Njastad:2010,Bultheel+Cantero+Cruz-Barroso:2015}.
In this article, we consider more general integration
rules for the $d$-dimensional space and general orthonormal functions but restrict the integrals
to the real line.

Given the matrix multiplication operator interpretation of numerical integration,
we can use theoretical results for multiplication operators from other contexts and apply them to numerical integration.
Matrix approximation of multiplication operators of arbitrary complex functions 
in general orthonormal function bases 
was considered in \cite{Morrison:1995}. Although the aim in 
\cite{Morrison:1995} was not primarily on numerical 
integration, some of the convergence results apply to special cases where the 
integration weights are equal.
We adopt notation from there and a fundamental theorem \cite[Theorem 3.4]{Morrison:1995} about 
the placement of the nodes in numerical integration.
The matrix approximation of multiplication operator has also been used as a starting
point for finding nodes for generalized Gaussian quadratures
\cite{Vioreanu+Rokhlin:2014,Vioreanu:2012}.

The contribution of this article is to formulate a general class of numerical integration problems as matrix functions
of finite-dimensional approximations of multiplication operators. We also study the convergence of
the resulting method as well as show that the nodes of the method are located in a closed interval determined
by the infimum and supremum of the inner function $g(\boldsymbol{x})$. 
The usual approach on Gaussian quadrature is polynomial interpolation
\cite{Golub+Welsch:1969,Davis+Rabinowitz:1984,Gautschi:1996,Gautschi:2002,Gautschi:2004,Laurie:2001,Golub+Meurant:2009},
but
the identification of the Jacobi matrix as a multiplication operator allows us to make
three generalizations:
\begin{enumerate}
\item We can use other inner functions than just $g(x)=x$. The inner function can also be
a scalar function of a multidimensional variable.
\item 
A matrix function is also an approximation to a multiplication operator and it can be 
used
in approximations of integrals that involve products of different functions.
\item We can generalize the notion of classical Gaussian quadrature to any 
basis functions, not just polynomials.
The usual approach to generalized Gaussian quadrature is based on a set of 
non-linear
equations \cite{Ma+Rokhlin+Wandzura:1996,Vioreanu+Rokhlin:2014}.
\end{enumerate}

The main specialized operator-theoretic tools that we use are multiplication operators
and their infinite matrix representations. Both subjects are well presented in
\cite{Weidmann:1980}.
The books \cite{Segal+Kunze:1978,Reed+Simon:1981,Simon:2015} lack only in the infinite matrices
which can be found in \cite[Chapter 3, section 1]{Stone:1932} or
\cite[Sections 26 and 47]{Akhiezer+Glazman:1993}. 
The book \cite{Segal+Kunze:1978} also presents different traditional definitions and ideas of integration
from operator theoretic perspective and builds a completely operator theory based
algebraic integration theory (see also \cite{Segal:1965}).

The paper is organized as follows. The key concepts are introduced
in Section \ref{sec:prelim}.
Our main theoretical results are in
Section \ref{sec:main}.
Experimental
results are presented in Section \ref{sec:experiments}, and the conclusions follow in
Section \ref{sec:conclusions}.

\section{Preliminaries}
\label{sec:prelim}

The purpose of this paper is to numerically solve an integral of the form
\begin{equation}
  \int_{\Omega}f(g(\boldsymbol{x}))\,w(\boldsymbol{x})\,d\boldsymbol{x},
  \label{eq:first_int}
\end{equation}
where $\Omega\subset\mathbb{R}^d$,
$w :~ \Omega \mapsto[0,\infty)$,
$g :~ \Omega\mapsto\mathbb{R}$,
$f :~ \overline{g(\Omega)}\mapsto\mathbb{R}$,
and $\overline{g(\Omega)}$ is the closure of the image of the inner 
function $g$.
For the ease of exposition,
we normalize $w(\boldsymbol{x})$ so that
$\int_{\Omega}w(\boldsymbol{x})\,d\boldsymbol{x}=1$.

Because the image of the inner function $g$ is one dimensional,
the integral over the outer function $f$ is one dimensional integral with 
respect
to measure $\mu$ defined as
  $\mu(f) = \int_{g(\Omega)} f(y)\,d\mu(y) =
  \int_\Omega f(g(\boldsymbol{x}))\,w(\boldsymbol{x})\,d\boldsymbol{x}$.
For example,
$g(\boldsymbol{x})=x_1+x_2$ or $g(\boldsymbol{x})=x_1\,x_2$ can be
simple enough functions that it is possible to solve the involved integrals
in closed form while the composite function $f(g(\boldsymbol{x}))$ is
more complicated and requires numerical approach. It is worth to
note that the numerical subproblem is only one dimensional. 

We also define a Hilbert space
$\mathcal{L}_w^2(\Omega)$ with the inner
product
  $\langle \phi, \psi \rangle = \int_\Omega \overline{\phi(\boldsymbol{x})}\,\psi(\boldsymbol{x})\,
w(\boldsymbol{x})\,d\boldsymbol{x}$.
Let $g$ be a bounded function.
We define a multiplication operator 
  $\mathsf{M}[g]:~\mathcal{L}_w^2(\Omega)\mapsto \mathcal{L}_w^2(\Omega)$
almost everywhere pointwise as
$(\mathsf{M}[g]\,\phi)(\boldsymbol{x})=g(\boldsymbol{x})\,\phi(\boldsymbol{x})$.
Thus, the effect of the operator $\mathsf{M}[g]$ on a function $\phi$ is
multiplication with the function $g$ and the equality \label{eq:pointwise-mg}
may not hold for a set of points that does not contribute anything on an
integral over the weight function $w(\boldsymbol{x})$.

We define a bounded function of a multiplication operator as
a multiplication operator of the composite function
$(f(\mathsf{M}[g])\,\phi)(\boldsymbol{x})=
f(g(\boldsymbol{x}))\,\phi(\boldsymbol{x})=(\mathsf{M}[f(g)]\,\phi)(\boldsymbol{x})$.
With this notion, we can rewrite the integral as
\begin{equation}
\label{eq:innerproduct-integral}
\int_{\Omega}f(g(\boldsymbol{x}))\,w(\boldsymbol{x})\,d\boldsymbol{x}
=\langle 1, f(\mathsf{M}[g])\,1\rangle.
\end{equation}

In our practical examples, the Hilbert space $\mathcal{L}_w^2(\Omega)$
is separable, that is, there
is a countable set of orthonormal basis functions 
$\phi_0(\boldsymbol{x}),\phi_1(\boldsymbol{x}),\ldots$ that are dense
in $\mathcal{L}_w^2(\Omega)$. 
We define an orthogonal projection operator $\mathsf{P}_n$ as
\begin{equation}
\label{eq:projection}
\mathsf{P}_n\,f = \sum_{k=0}^n \langle \phi_k, f \rangle \, \phi_k.
\end{equation}
We can project a multiplication operator
to a subspace of the Hilbert space by first projecting the
operand and then projecting the result of the operation again. The projected
multiplication operator is thus $\mathsf{P}_n\,\mathsf{M}[g]\,\mathsf{P}_n$.
When the projection is to a finite dimensional subspace, the projected multiplication
operator can be represented with a finite matrix $\boldsymbol{M}_n[g]$ with elements
\begin{equation*}
\left[
  \boldsymbol{M}_n[g]
\right]_{i,j}
=\langle 
\phi_i,\mathsf{M}[g]\,\phi_j \rangle,~i,j=0,1,2,\ldots,n.
\end{equation*}
We start the matrix element indexing at 0.

As the matrix $\boldsymbol{M}_n[g]$ is Hermitian, it has the eigenvalue decomposition
\begin{equation*}
\boldsymbol{M}_n[g]=\boldsymbol{U}\,
\left[
  \begin{array}{cccc}
    \lambda_0 &           &        & \\
              & \lambda_1 &        & \\
              &           & \ddots & \\
              &           &        & \lambda_n
  \end{array}
\right]\,
\boldsymbol{U}^*,
\end{equation*}
where $\boldsymbol{U}$ is a unitary matrix and $\lambda_i \in \mathbb{R}$ are the 
eigenvalues of 
$\boldsymbol{M}_n[g]$. A function of a Hermitian matrix can be defined
with the use of the eigenvalue decomposition as (see \cite[Chapter 1.2]{Higham:2008}
or \cite[Corollary 11.1.2]{Golub+vanLoan:1996})
\begin{equation*}
f(\boldsymbol{M}_n[g])=\boldsymbol{U}\,
\left[
  \begin{array}{cccc}
    f(\lambda_0) &              &        & \\
                 & f(\lambda_1) &        & \\
                 &              & \ddots & \\
                 &              &        & f(\lambda_n)
  \end{array}
\right]\,
\boldsymbol{U}^*.
\end{equation*}
Usually, for a finite matrix $f(\boldsymbol{M}_n[g])\neq\boldsymbol{M}_n[f(g)]$.

If the orthonormal basis functions span the whole Hilbert space
$\mathcal{L}_w^2(\Omega)$, the space is
isomorphic with $\ell^2$, the space of the square summable sequences or infinite column vectors.
The basis functions $\phi_i$ are isomorphic with infinite column vectors $\boldsymbol{e}_i$, that is, the basis vectors of $\ell^2$ that have a 1 in $i$th component
and 0 in other components. 
We denote an isomorphism by $\simeq$.
Generally, we have the following isomorphisms:
\begin{eqnarray}
\label{eq:space-isomorphism}
  \mathcal{L}_w^2(\Omega) & \simeq & \ell^2\\
\label{eq:basis-isomorphism}
\phi_i & \simeq & \boldsymbol{e}_i\\
\label{eq:vector-isomorphism}
\psi & \simeq &
\left[
\begin{array}{c}
  \langle \phi_0, \psi\rangle \\
  \langle \phi_1, \psi\rangle \\
  \vdots
\end{array}
\right]\\
\label{eq:matrix-isomorphism}
\mathsf{M}[\operatorname{g}]\,\psi &\simeq & \boldsymbol{M}_\infty[g]\,
\left[
\begin{array}{c}
  \langle \phi_0, \psi\rangle \\
  \langle \phi_1, \psi\rangle \\
  \vdots
\end{array}
\right]\\
\label{eq:full-isomorphism}
f(\mathsf{M}[g]) & \simeq & f(\boldsymbol{M}_\infty[g])\\
\label{eq:1-element-isomorphism}
\langle 1, f(\mathsf{M}[g])\, 1\rangle & = & 
\boldsymbol{e}_0^\top\,f(\boldsymbol{M}_\infty[g])\,\boldsymbol{e}_0.
\end{eqnarray}
Isomorphisms \eqref{eq:vector-isomorphism} and \eqref{eq:basis-isomorphism} follow
from \eqref{eq:space-isomorphism} which is equivalent to separability.
Sufficient conditions for \eqref{eq:matrix-isomorphism} are
\eqref{eq:space-isomorphism} and that $\mathsf{M}[g]$ has
an infinite matrix representation (see
\cite[Theorems 3.4 and 3.5]{Stone:1932} or 
\cite[Section 26 and 47]{Akhiezer+Glazman:1993})
for which boundedness of $g$ is sufficient. For
\eqref{eq:full-isomorphism} sufficient conditions are
\eqref{eq:matrix-isomorphism} and 
that $\mathsf{M}[f(g)]$ has an infinite matrix representation.
The last isomorphism \eqref{eq:1-element-isomorphism}
is the most important one for numerical integration due to
the identity \eqref{eq:innerproduct-integral}. It is
not only an isomorphism, but also an equality because
in both Hilbert spaces the quantity is a real scalar. The isomorphism
\eqref{eq:1-element-isomorphism}
can still
hold even
if the isomorphism \eqref{eq:full-isomorphism} does not.

In the case $d=1$,
$g(x)=x$, and polynomial $\phi_i$, the infinite matrix
$\boldsymbol{M}_\infty[g]$ is a tridiagonal Jacobi matrix $\boldsymbol{J}_\infty$
\cite{Simon:2008,Simon:2015}. 
In that special case, without the isomorphism considerations,
an approximation of \eqref{eq:1-element-isomorphism} has been
recognized as a Gaussian quadrature rule in a form 
$\int_\Omega f(x)\,w(x)\,dx \approx
\boldsymbol{e}_0^\top\,f(\boldsymbol{J}_n)\,\boldsymbol{e}_0$
where $\boldsymbol{J}_n$ is a finite truncation of $\boldsymbol{J}_\infty$
\cite[Equation (2.10)]{Gautschi:2002},
\cite[Equation (3.1.8)]{Gautschi:2004},
\cite[Theorem 6.6]{Golub+Meurant:2009}.
From the isomorphism considerations, it is easy to generalize the inner function
to something else than $g(x)=x$ and likewise the basis functions to any orthonormal functions
instead of polynomials. Since the matrix approximation approximates a multiplication
operator, it is quite natural to use it to approximate multiplication
with a function. In that case, other matrix elements, not just $(0,0)$ element, are used as well as we will show in the following.

\section{Main results}
\label{sec:main}
The  above  discussion  suggests  a  method  for  approximating  an
integral  of  the  form \eqref{eq:first_int} as follows. 
Take orthonormal basis functions $\phi_0=1,\phi_1,\phi_2,\ldots,\phi_n$ and
for all $i,j=0,1,2,\ldots n$ compute the matrix elements
\begin{equation*}
[\boldsymbol{M}_n[g]]_{i,j}=
\int_\Omega g(\boldsymbol{x})\,\overline{\phi_i(\boldsymbol{x})}\,
\phi_j(\boldsymbol{x})\,
w(\boldsymbol{x})\,d\boldsymbol{x}.
\end{equation*}
We can then approximate the integral \eqref{eq:first_int} numerically by
\begin{equation*}
\int_\Omega f(g(\boldsymbol{x}))\,w(\boldsymbol{x})\,d\boldsymbol{x}
\approx [f(\boldsymbol{M}_n[g])]_{0,0}.
\end{equation*}
This formula is a quadrature rule in the traditional sense since
\begin{equation*}
[f(\boldsymbol{M}_n[g])]_{0,0}=
\boldsymbol{e}_0^\top\,f(\boldsymbol{M}_n)\,\boldsymbol{e}_0
=
\sum_{i=0}^n f(\lambda_i)\,
\boldsymbol{e}_0^\top\,\boldsymbol{u}_i\,\boldsymbol{u}_i^*\,\boldsymbol{e}_0
=\sum_{i=0}^n |[\boldsymbol{u}_i]_0|^2\,f(\lambda_i),
\end{equation*}
where $\lambda_i$ and $\boldsymbol{u}_i$ are the eigenvalues and 
unit length eigenvectors of
$\boldsymbol{M}_n[g]$ and $[\boldsymbol{u}_i]_0$ is the 0th component of the
eigenvector $i$.
In the quadrature terminology, $\lambda_i$ are the nodes or abscissas and 
$|[\boldsymbol{u}_i]_0|^2$ are the weights. We see that the
weights are all positive which is important for convergence and stability
of the quadrature and it is also true for Gaussian
quadrature rules \cite[Theorem 1.46]{Gautschi:2004}.

We can also use other matrix elements than $i=j=0$ to approximate integrals
$\int_\Omega f(g(\boldsymbol{x}))\,\overline{\phi_i(\boldsymbol{x})}\,
\phi_j(\boldsymbol{x})\,w(\boldsymbol{x})\,d\boldsymbol{x}
\approx [f(\boldsymbol{M}_n[g])]_{i,j}$.
If vector $\boldsymbol{v}$ contains the Fourier series coefficients of $\psi(\boldsymbol{x})$, that is, 
$v_i=\langle \phi_i,\psi\rangle$ then we can use it to approximate
$\int_\Omega f(g(\boldsymbol{x}))\,
\psi(\boldsymbol{x})\,w(\boldsymbol{x})\,d\boldsymbol{x}
\approx [f(\boldsymbol{M}_n[g])\,\boldsymbol{v}]_0$.
Since the matrix $\boldsymbol{M}_n[g]$ approximates the multiplication operator,
it can be used to approximate the integrals of the inner product form
\begin{align*}
\int_\Omega f_1(g_1(\boldsymbol{x}))\,f_2(g_2(\boldsymbol{x}))\,
w(\boldsymbol{x})\,d\boldsymbol{x} &\approx 
[f_1(\boldsymbol{M}_n[g_1])\,f_2(\boldsymbol{M}_n[g_2])]_{0,0}
\end{align*}
or more generally
\begin{equation*}
\int_\Omega 
\prod_{i=0}^m
f_i(g_i(\boldsymbol{x}))
w(\boldsymbol{x})\,d\boldsymbol{x} \approx
\left[ 
\prod_{i=0}^m
f_i(\boldsymbol{M}_n[g_i])
\right]_{0,0}.
\end{equation*}
Here, we must notice that the value of the numerical approximation depends on the
order of the matrices in the product. This is because the matrix
approximations do not commute with respect to multiplication although the multiplication operators do.

We define the sum and product of the multiplication operators pointwise
\begin{align*}
  ((\mathsf{M}[f]+\mathsf{M}[g])\,\phi)(\boldsymbol{x})
  = 
  (f(\boldsymbol{x})+g(\boldsymbol{x}))\,\phi(\boldsymbol{x})
  &=
  (\mathsf{M}[f+g]\,\phi)(\boldsymbol{x}),\\
  ((\mathsf{M}[f]\,\mathsf{M}[g])\,\phi)(\boldsymbol{x})
  = 
  f(\boldsymbol{x})\,g(\boldsymbol{x})\,\phi(\boldsymbol{x})
  &=
  (\mathsf{M}[f\,g]\,\phi)(\boldsymbol{x}).
\end{align*}
By this definition, the multiplication operators clearly commute. We see that
for a finite matrix approximation,
commutativity for the product is not preserved in the homomorphism
while for the sum it is.

Two effects of the non-commutativity are that the value of the approximation
depends on the order of the terms in the product and the product matrix
is not necessarily Hermitian. 
A non-Hermitian matrix can also be non-diagonalizable and the matrix
function may have to involve derivatives.
It is also possible to symmetrize the product of matrices by computing
the product in two opposite orders and taking the average, that is, the matrix
\begin{equation*}
\frac{1}{2}\,(
f_1(\boldsymbol{M}_n[g_1])\,f_2(\boldsymbol{M}_n[g_2])\,
f_3(\boldsymbol{M}_n[g_3])+
f_3(\boldsymbol{M}_n[g_3])\,f_2(\boldsymbol{M}_n[g_2])\,
f_1(\boldsymbol{M}_n[g_1]))
\end{equation*}
is Hermitian and we can approximate
\begin{align*}
  &\int_{\Omega} f_4(f_1(g_1(\boldsymbol{x}))\,f_2(g_2(\boldsymbol{x}))\,
  f_3(g_3(\boldsymbol{x})))
w(\boldsymbol{x})\,d\boldsymbol{x}
\approx\\
&
\left[
f_4
\left(
\frac{\scriptstyle
  f_1(\boldsymbol{M}_n[g_1])\,f_2(\boldsymbol{M}_n[g_2])\,
  f_3(\boldsymbol{M}_n[g_3])+
  f_3(\boldsymbol{M}_n[g_3])\,f_2(\boldsymbol{M}_n[g_2])\,
  f_1(\boldsymbol{M}_n[g_1])
}{2}
\right)
\right]_{0,0}.
\end{align*}
Basically, we can replace functions in any formula with matrices and an approximation
for the integral is given by the upper left corner of the final matrix.

\begin{remark}
  We can also define the matrix $\boldsymbol{M}_n[g]$ in terms of non-or\-tho\-nor\-mal functions
  as in \cite[Section 4]{Golub+Welsch:1969} and \cite[Chapter 5.2]{Golub+Meurant:2009} for Gaussian quadrature.
  Given arbitrary linearly independent but non-orthonormal functions
  $\tilde{\phi}_0,\tilde{\phi}_1,\ldots,\tilde{\phi}_n$,
  the Gram matrix has elements
  $[\boldsymbol{G}]_{i,j}=\langle \tilde{\phi}_i,\tilde{\phi}_j \rangle.$
  We can define the matrix as
  \begin{equation}
    \label{eq:non-orthonormal-matrix}
    \boldsymbol{M}_n[g]=(\boldsymbol{R}^{-1})^*\,\widetilde{\boldsymbol{M}}_n[g]\,\boldsymbol{R}^{-1},
  \end{equation}
  where $\boldsymbol{R}$ is the Cholesky decomposition of the Gram matrix, that is,
  $\boldsymbol{R}^*\,\boldsymbol{R} = \boldsymbol{G}$ and
$    \left[\widetilde{\boldsymbol{M}}_n[g]\right]_{i,j}=\langle\tilde{\phi}_i,g\,\tilde{\phi}_j\rangle$.
  It was noted already in \cite{Golub+Welsch:1969,Golub+Meurant:2009} that the Gram matrix is ill-conditioned and
  \eqref{eq:non-orthonormal-matrix} is not suitable for numerical computations.
  However, when it is possible to compute $\boldsymbol{M}_n[g]$ in closed form,
  \eqref{eq:non-orthonormal-matrix} is faster on symbolic computations than orthonormalizing the basis
  functions with symbolic computations. For stable numerical
  computations, the algorithms in \cite{Yanagisawa+Ogita+Oishi:2014, Ozaki+Ogita+Oishi+Rump:2012} could be used
  for the inverse of Cholesky factorization and multiplication.
\end{remark}

\subsection{The range of the nodes}
\label{sec:range}

The nodes or abscissas of a numerical integration rule are the points $\boldsymbol{x}_i$ where the
integrand function is evaluated. In our approach the nodes are the eigenvalues
of the matrix $\boldsymbol{M}_n[g]$. 
In numerical integration we want to avoid nodes that are outside the domain of the function.
The following theorem gives conditions which ensure that the nodes are within 
the domain of
function $f$.
\begin{theorem}
\label{thm:realrange}
For a bounded real function $g$, the eigenvalues of
$\boldsymbol{M}_n[g]$ are 
 in the closed interval
 $[\inf g,\sup g]$.
\end{theorem}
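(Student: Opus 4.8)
The plan is to exploit the fact that $\boldsymbol{M}_n[g]$ is Hermitian (which holds because $g$ is real, so $[\boldsymbol{M}_n[g]]_{j,i}=\langle\phi_j,\mathsf{M}[g]\phi_i\rangle=\overline{\langle\phi_i,\mathsf{M}[g]\phi_j\rangle}$) and to use the standard characterization of the eigenvalues of a Hermitian matrix through its Rayleigh quotient. Every eigenvalue $\lambda$ of a Hermitian matrix lies in its numerical range, i.e.\ it equals $\boldsymbol{u}^*\boldsymbol{M}_n[g]\,\boldsymbol{u}$ for some unit-length eigenvector $\boldsymbol{u}$, and more generally the extreme eigenvalues are the minimum and maximum of the Rayleigh quotient over all nonzero vectors. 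Thus it suffices to show that $\inf g\le\boldsymbol{u}^*\boldsymbol{M}_n[g]\,\boldsymbol{u}\le\sup g$ for every $\boldsymbol{u}$ with $\boldsymbol{u}^*\boldsymbol{u}=1$.

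The key step is to translate the finite-dimensional Rayleigh quotient back into an integral in $\mathcal{L}_w^2(\Omega)$. Given a unit vector $\boldsymbol{u}=(u_0,\ldots,u_n)^\top$, I would form the function $\phi=\sum_{j=0}^n u_j\,\phi_j$. By orthonormality of the basis, $\|\phi\|^2=\sum_{j=0}^n|u_j|^2=\boldsymbol{u}^*\boldsymbol{u}=1$. Expanding the quadratic form and using the definition of the matrix entries gives
\begin{equation*}
\boldsymbol{u}^*\boldsymbol{M}_n[g]\,\boldsymbol{u}
=\sum_{i,j=0}^n \overline{u_i}\,u_j\,\langle\phi_i,\mathsf{M}[g]\,\phi_j\rangle
=\langle\phi,\mathsf{M}[g]\,\phi\rangle
=\int_\Omega g(\boldsymbol{x})\,|\phi(\boldsymbol{x})|^2\,w(\boldsymbol{x})\,d\boldsymbol{x}.
\end{equation*}

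Finally, I would observe that $|\phi(\boldsymbol{x})|^2\,w(\boldsymbol{x})\,d\boldsymbol{x}$ is a nonnegative measure of total mass $\|\phi\|^2=1$, so the last integral is a weighted average of the values of $g$. Since $\inf g\le g(\boldsymbol{x})\le\sup g$ holds $w$-almost everywhere, multiplying by the nonnegative density and integrating yields $\inf g\le\boldsymbol{u}^*\boldsymbol{M}_n[g]\,\boldsymbol{u}\le\sup g$, and hence $\lambda\in[\inf g,\sup g]$. There is no serious obstacle here; the only point requiring mild care is that $\inf g$ and $\sup g$ should be understood as essential bounds with respect to $w$, consistent with the almost-everywhere definition of $\mathsf{M}[g]$ in the preliminaries, so that the sandwiching inequality for the integral is valid.
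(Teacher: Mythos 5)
Your proof is correct, but it takes a genuinely different route from the paper. The paper does not argue from first principles: it invokes the complex version of the result, namely Morrison's theorem (cited as Theorem 3.4 of that reference), which states that the eigenvalues of $\boldsymbol{M}_n[g]$ lie in the convex hull of the \emph{essential range} of $g$, and then observes that for real $g$ this convex hull is the interval between the essential infimum and essential supremum, which is contained in $[\inf g,\sup g]$. You instead give a self-contained Rayleigh-quotient argument: every eigenvalue equals $\boldsymbol{u}^*\boldsymbol{M}_n[g]\,\boldsymbol{u}$ for a unit eigenvector, this quadratic form pulls back to $\langle\phi,\mathsf{M}[g]\,\phi\rangle=\int_\Omega g\,|\phi|^2\,w\,d\boldsymbol{x}$ with $\phi=\sum_j u_j\phi_j$ of unit norm, and a weighted average of $g$ against the unit-mass density $|\phi|^2 w$ is sandwiched between $\inf g$ and $\sup g$. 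Your approach buys an elementary, citation-free proof that in fact bounds the entire numerical range (not only the eigenvalues) and, if you keep essential bounds throughout, recovers the sharper essential-range localization that Morrison's theorem provides in the real case; the paper's approach buys brevity and a direct link to the more general complex-valued statement. One small correction of emphasis: your closing caveat about needing $\inf g$ and $\sup g$ to be \emph{essential} bounds is unnecessary for the theorem as stated, since $\inf g\le g(\boldsymbol{x})\le\sup g$ holds pointwise everywhere (hence almost everywhere), and the essential bounds only tighten the interval; so the plain infimum and supremum already suffice, and the stated conclusion is the weaker of the two.
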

\begin{proof}
  The complex version of the theorem \cite[Theorem 3.4]{Morrison:1995} (or
  \cite[Theorem 3.4.2]{Vioreanu:2012} for bounded and additionally 
  continuous $g$)
  states that the eigenvalues are
  in the convex hull of the essential range of the function $g$. For a real
  function, the convex hull of the essential range is the interval between
  the essential infimum and the essential supremum which in its turn is
  inside $[\inf g, \sup g]$.
\end{proof}
From the theorem we see that if the range $g(\Omega)$  is convex,
that is, the range does not have any holes, then everything is fine and
the nodes are in $\overline{g(\Omega)}$. 
If $g(\Omega)$ 
has holes, then it is possible to extend the definition
of $f$ as having the value of $0$ on the holes of $\overline{g(\Omega)}$ or by dividing $\Omega$
into parts $\Omega_i$ so that $\overline{g(\Omega_i)}$ is convex for each
$i$.

Theorem \ref{thm:realrange} is also well known property of Gaussian quadrature
\cite[Theorem 1.46]{Gautschi:2004}. Another well known property of the Gaussian
quadrature is the interlacing property of the nodes.
\begin{theorem}
Eigenvalues of $\boldsymbol{M}_n[g]$ and $\boldsymbol{M}_{n+1}[g]$ interlace,
that is, let $\{\alpha_i\}_{i=0}^n$ be eigenvalues of $\boldsymbol{M}_n[g]$ and
$\{\beta_i\}_{i=0}^{n+1}$ be eigenvalues of $\boldsymbol{M}_{n+1}[g]$
ordered from smallest to largest,
then $\beta_i\leq \alpha_i$ for $i=0,1,\ldots,n$ and $\alpha_n\leq \beta_{n+1}$.
\end{theorem}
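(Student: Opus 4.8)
The plan is to reduce the statement to the classical Cauchy interlacing theorem for a Hermitian matrix and a principal submatrix obtained by deleting one row and one column. The key structural observation is that the entries $[\boldsymbol{M}_n[g]]_{i,j}=\langle \phi_i,\mathsf{M}[g]\,\phi_j\rangle$ do not depend on the truncation level $n$; only the range of the indices does. Hence $\boldsymbol{M}_n[g]$, a Hermitian matrix acting on $\mathbb{C}^{n+1}$, is exactly the leading principal submatrix of $\boldsymbol{M}_{n+1}[g]$, which acts on $\mathbb{C}^{n+2}$, obtained by deleting the last row and column. Because $g$ is real, both matrices are Hermitian, their eigenvalues are real, and the orderings $\{\alpha_i\}_{i=0}^n$ and $\{\beta_i\}_{i=0}^{n+1}$ in the statement are well defined.

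First I would record the Courant--Fischer min--max characterization of the ordered eigenvalues of a Hermitian matrix and identify $\mathbb{C}^{n+1}$ with the subspace $V=\{\boldsymbol{x}\in\mathbb{C}^{n+2}:[\boldsymbol{x}]_{n+1}=0\}$. The point to check is that for every $\boldsymbol{x}\in V$ the Rayleigh quotient $\boldsymbol{x}^*\,\boldsymbol{M}_{n+1}[g]\,\boldsymbol{x}/(\boldsymbol{x}^*\,\boldsymbol{x})$ coincides with the Rayleigh quotient of $\boldsymbol{M}_n[g]$ evaluated at the vector of the first $n+1$ components; this is precisely the submatrix property above. The inequalities $\beta_i\leq\alpha_i$ for $i=0,1,\ldots,n$ then follow from the min form of Courant--Fischer: $\beta_i$ is the minimum over all $(i+1)$-dimensional subspaces of $\mathbb{C}^{n+2}$ of the maximal Rayleigh quotient, whereas $\alpha_i$ is the same minimum restricted to those subspaces contained in $V$, and minimizing over a larger family can only give a smaller value. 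The remaining inequality $\alpha_n\leq\beta_{n+1}$ is even more direct, since the largest eigenvalue equals the maximum of the Rayleigh quotient and maximizing over the proper subspace $V$ rather than over all of $\mathbb{C}^{n+2}$ can only decrease it. Once the submatrix structure is established, one may alternatively cite the Cauchy interlacing theorem as a standard result.

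The main obstacle is not analytic but bookkeeping: one must match the zero-based indexing used throughout the paper, account for the difference of one in the dimensions of the two matrices, and keep the subspace-dimension counts in the Courant--Fischer formulas aligned, so that the indices yield exactly $\beta_i\leq\alpha_i$ for $i=0,1,\ldots,n$ together with $\alpha_n\leq\beta_{n+1}$ and nothing spurious. A second point worth stating explicitly is that restricting to $V$ genuinely turns $\boldsymbol{M}_{n+1}[g]$ into $\boldsymbol{M}_n[g]$, which again rests solely on the $n$-independence of the matrix entries; beyond this the argument is a routine application of the standard Hermitian eigenvalue inequalities and requires no properties of $g$ other than that it be real valued.
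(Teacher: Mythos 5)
Your proposal is correct and takes essentially the same route as the paper, which simply observes that $\boldsymbol{M}_n[g]$ is a leading principal submatrix of $\boldsymbol{M}_{n+1}[g]$ and invokes Cauchy's interlacing theorem. Your Courant--Fischer argument merely unpacks the standard proof of that cited theorem, while your explicit remark that the entries $\langle \phi_i,\mathsf{M}[g]\,\phi_j\rangle$ are independent of the truncation level $n$ is precisely the (implicit) justification the paper's one-line proof rests on.
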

\begin{proof}
This follows directly from well known Cauchy's interlacing theorem
\cite[Corollary III.1.5]{Bhatia:1997} or \cite[Theorem 1.3.5]{Simon:2015}.
\end{proof}
However, for Gaussian quadrature the inequality is strict, that is,
$\beta_i < \alpha_i$ and $\alpha_n < \beta_{n+1}$
\cite[Theorem 1.20]{Gautschi:2004}.
This demonstrates that when the basis functions
$\phi_i$ are not polynomials or the inner function $g(x)\neq x$, some
properties of Gaussian quadrature may hold in similar, but not necessarily
in exactly the same form.

\subsection{Convergence for bounded functions}
In this section, we analyze the convergence of the new method for bounded functions.

A basic requirement for the convergence is that the multiplication
operator has an infinite matrix representation. For a bounded
function $g$, the multiplication operator $\mathsf{M}[g]$ is also bounded and
it has an infinite matrix representation $\boldsymbol{M}_\infty[g]$ if the
Hilbert space is separable \cite[Theorem 3.5]{Stone:1932},
\cite[Section 26]{Akhiezer+Glazman:1993}.
Thus, for a bounded function $g$, we have the isomorphism
\eqref{eq:matrix-isomorphism}
for all $\psi\in\mathcal{L}_w^2(\Omega)$.

For bounded operators, we can use the concept of \textit{strong convergence}.
We say that bounded operators $\mathsf{A}_n$ converge strongly to
a bounded operator $\mathsf{A}$ if for any $\phi$ in a Hilbert space
$\|(\mathsf{A}_n-\mathsf{A})\,\phi\|\rightarrow 0$ as $n\rightarrow \infty$.
Then we express this as
$\mathsf{A}_n\xrightarrow{s}\mathsf{A}$.

In a separable Hilbert space with dense basis functions
$1,\phi_1,\phi_2,\ldots$, we can use the orthogonal projection operator
$\mathsf{P}_n$ of \eqref{eq:projection} and we see that 
for a bounded function $g$ we have
$  \mathsf{P}_n\,\mathsf{M}[g]\,\mathsf{P}_n
  \xrightarrow{s}\mathsf{M}[g]$.
This is equivalent to
\begin{equation*}
  \left[
    \begin{array}{ccc}
      \boldsymbol{M}_n[g] & 0      & \ldots\\
      0                   & 0      & \ldots\\
      \vdots              & \vdots & \ddots
    \end{array}
    \right]
  \xrightarrow{s}
  \boldsymbol{M}_\infty[g].
\end{equation*}

For a bounded function of a bounded multiplication operator,
we have the following theorem.
\begin{theorem} \label{thm:convergence1}
  Let $\mathcal{L}_w^2(\Omega)$ be a separable Hilbert space with dense
  set of basis functions $\phi_0=1,\phi_1,\ldots$.
  Let $g$ be a bounded real function. Let
  $\mathsf{E}(t)$ be the spectral family of $\mathsf{M}[g]$.
  Let $f$ be a bounded piecewise continuous function on 
  $\mathbb{R}$.
  Let the set of discontinuities of $f$ be $K$
  and closure of discontinuities $\overline{K}$.
  Let
    $\int_{\overline{K}} d\mathsf{E}(t)=\mathsf{0}$,
  that is, the
  discontinuities of $f(t)$ are not
  discontinuities of $\mathsf{E}(t)$ and $K$ is not dense in any
  subinterval of $\overline{g(\Omega)}$. Then
  \begin{equation*}
    f(\mathsf{P}_n\,\mathsf{M}[g]\,\mathsf{P}_n)
    \xrightarrow{s}
    f(\mathsf{M}[g])=\mathsf{M}[f(g)]
  \end{equation*}
  or equivalently
  \begin{equation*}
    \left[
      \begin{array}{ccc}
        f(\boldsymbol{M}_n[g]) & 0      & \ldots\\
        0                      & f(0)   & \ldots\\
        \vdots                 & \vdots & \ddots
      \end{array}
      \right]
    \xrightarrow{s}
    f(\boldsymbol{M}_\infty[g])=\boldsymbol{M}_\infty[f(g)].
  \end{equation*}
\end{theorem}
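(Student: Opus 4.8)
The plan is to regard $A_n := \mathsf{P}_n\,\mathsf{M}[g]\,\mathsf{P}_n$ and $A := \mathsf{M}[g]$ as bounded self-adjoint operators, to exploit the strong convergence $A_n \xrightarrow{s} A$ established just above together with the uniform bound $\|A_n\|\le\|g\|_\infty$, and to note that by Theorem \ref{thm:realrange} (and because the lower right block contributes only the spectral value $0$) every $\sigma(A_n)$ and $\sigma(A)$ lies in the single compact interval $I=[-\|g\|_\infty,\|g\|_\infty]$. Since functional calculus respects the orthogonal decomposition, $f(A_n)$ is automatically the block $\operatorname{diag}(f(\boldsymbol{M}_n[g]),f(0)\,\mathsf{I})$, so the two displayed forms are the same statement, and $f(\mathsf{M}[g])=\mathsf{M}[f(g)]$ is the pointwise definition of a function of a multiplication operator recorded earlier. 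Everything then reduces to propagating strong convergence from polynomials through continuous functions to the piecewise continuous $f$.

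First I would settle the continuous case. Writing $A_n^k\phi-A^k\phi=A_n^{k-1}(A_n-A)\phi+(A_n^{k-1}-A^{k-1})A\phi$ and using $\|A_n^{k-1}\|\le\|g\|_\infty^{k-1}$ gives $p(A_n)\xrightarrow{s}p(A)$ for every polynomial $p$ by induction. For an arbitrary continuous $h$ on $I$, Weierstrass approximation supplies $p$ with $\sup_I|h-p|<\varepsilon$; as all spectra lie in $I$ we get $\|(h-p)(A_n)\|<\varepsilon$ and $\|(h-p)(A)\|<\varepsilon$ uniformly, and combining with $p(A_n)\xrightarrow{s}p(A)$ yields $h(A_n)\xrightarrow{s}h(A)$. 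Testing against a fixed $\phi$ then gives $\langle\phi,h(A_n)\phi\rangle\to\langle\phi,h(A)\phi\rangle$, i.e.\ the scalar spectral measures $\mu_{n,\phi}:=\langle\phi,\mathsf{E}_n(\cdot)\phi\rangle$ (with $\mathsf{E}_n$ the spectral measure of $A_n$) converge weakly to $\mu_\phi:=\langle\phi,\mathsf{E}(\cdot)\phi\rangle$, all carried by $I$ with common total mass $\|\phi\|^2$.

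For the piecewise continuous $f$ I would fix $\phi$ and $\varepsilon>0$ and localise around the discontinuities. The hypothesis $\int_{\overline{K}}d\mathsf{E}(t)=\mathsf{0}$ gives $\mu_\phi(\overline{K})=0$, so by continuity of the finite measure $\mu_\phi$ from above the closed neighbourhoods $F_\delta:=\{t\in I:\operatorname{dist}(t,\overline{K})\le\delta\}$ satisfy $\mu_\phi(F_\delta)\downarrow\mu_\phi(\overline{K})=0$; pick $\delta$ with $\mu_\phi(F_\delta)<\varepsilon$. On the closed set $C:=I\setminus\operatorname{int}F_\delta$ we have $C\cap K=\emptyset$, so $f|_C$ is continuous; the Tietze extension theorem (with the bound preserved) produces a continuous $h$ on $I$ with $\|h\|_\infty\le\|f\|_\infty=:M$ and $h=f$ on $C$, whence $|f-h|^2\le 4M^2\,\mathbf{1}_{F_\delta}$. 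Now decompose
\begin{equation*}
(f(A_n)-f(A))\phi=(f(A_n)-h(A_n))\phi+(h(A_n)-h(A))\phi+(h(A)-f(A))\phi .
\end{equation*}
The middle term tends to $0$ by the continuous case. The last term obeys $\|(h(A)-f(A))\phi\|^2=\int_I|h-f|^2\,d\mu_\phi\le 4M^2\mu_\phi(F_\delta)<4M^2\varepsilon$. The first term obeys $\|(f(A_n)-h(A_n))\phi\|^2=\int_I|f-h|^2\,d\mu_{n,\phi}\le 4M^2\mu_{n,\phi}(F_\delta)$, and because $F_\delta$ is \emph{closed} the portmanteau theorem gives $\limsup_n\mu_{n,\phi}(F_\delta)\le\mu_\phi(F_\delta)<\varepsilon$. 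Letting $n\to\infty$ and then $\varepsilon\to0$ drives $(f(A_n)-f(A))\phi$ to $0$, the asserted strong convergence.

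The step I expect to be the main obstacle is the uniform-in-$n$ control of the first term: although $\mu_\phi$ does not charge $\overline{K}$, the approximating measures $\mu_{n,\phi}$ could a priori concentrate near the discontinuities. The resolution is precisely the pairing of a \emph{closed} neighbourhood $F_\delta$ of $\overline{K}$ with the portmanteau upper bound $\limsup_n\mu_{n,\phi}(F_\delta)\le\mu_\phi(F_\delta)$, which is why the hypothesis is naturally imposed on the closed set $\overline{K}$; the conditions that $K$ be nowhere dense and that the discontinuities not be atoms of $\mathsf{E}$ serve only to guarantee the effective hypothesis $\int_{\overline{K}}d\mathsf{E}(t)=\mathsf{0}$ that the argument consumes.
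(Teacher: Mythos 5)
Your proof is correct, but it is a genuinely different route from the paper's, because the paper does not actually prove this theorem: its ``proof'' is a one-line citation to Bade (1954, Theorem 2.6), a far more general result on strong convergence of functions of nets of self-adjoint (spectral) operators in Banach spaces. Your argument is a self-contained Hilbert-space proof of the special case needed here: polynomials via induction and the uniform bound $\|\mathsf{P}_n\,\mathsf{M}[g]\,\mathsf{P}_n\|\le\|g\|_\infty$, continuous functions via Weierstrass approximation on a common compact interval containing all spectra, and then piecewise continuous $f$ via weak convergence of the scalar spectral measures $\mu_{n,\phi}$ combined with the portmanteau inequality on the closed neighbourhood $F_\delta\supseteq\overline{K}$ and a Tietze extension. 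Your identification of the crux is exactly right: the only place the hypothesis $\int_{\overline{K}}d\mathsf{E}(t)=\mathsf{0}$ can enter is in controlling, uniformly in $n$, the mass the approximating measures put near $K$, and the closed-set/portmanteau pairing does this correctly (note it also harmlessly absorbs the large atom that $\mu_{n,\phi}$ carries at $0$ from the block $f(0)$, since that atom's mass $\|(\mathsf{I}-\mathsf{P}_n)\phi\|^2$ vanishes). What each approach buys: the citation buys brevity and much greater generality; your proof buys transparency, uses only classical tools, and exposes that the argument needs nothing beyond $\mathsf{E}(\overline{K})=\mathsf{0}$ for bounded Borel $f$ with discontinuity set $K$. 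Two small points for a careful writeup: (i) you should note explicitly that the paper's pointwise definition $f(\mathsf{M}[g])=\mathsf{M}[f(g)]$ agrees with the spectral functional calculus you integrate against, which follows from $\mathsf{E}(\Delta)=\mathsf{M}[\chi_{g^{-1}(\Delta)}]$ as recorded in the paper's remark; (ii) your closing assertion that nowhere-density of $K$ and absence of atoms of $\mathsf{E}$ on $K$ ``guarantee'' $\mathsf{E}(\overline{K})=\mathsf{0}$ is backwards --- they do not imply it (a nowhere dense closed set can still carry continuous spectral mass); those conditions are only the paper's informal gloss, and your proof rightly consumes the actual hypothesis.
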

\begin{proof}
  See \cite[Theorem 2.6]{Bade:1954} for a much more general proof
  that holds for nets of self-adjoint operators in Banach spaces.
\end{proof}
\begin{remark}
  Spectral family of a multiplication operator $\mathsf{M}[g]$ is
  defined by a characteristic function
  \begin{equation*}
\chi_{\{\boldsymbol{x}:~g(\boldsymbol{x})\leq t\}}(\boldsymbol{x}) = 
\left\{
\begin{array}{ll}
0, & g(\boldsymbol{x}) > t,\\
1, & g(\boldsymbol{x}) \leq t
\end{array}
\right.
  \end{equation*}
  as
  $\mathsf{E}(t)=\mathsf{M}[\chi_{\{\boldsymbol{x}:~g(\boldsymbol{x})\leq t\}}]$
  \cite[Section 7.2, Example 1]{Weidmann:1980}.
  The only discontinuities of $\mathsf{E}(t)$ are 
  the eigenvalues of $\mathsf{M}[g]$, that is, the values $\lambda$ that satisfy
  $\mathsf{M}[g]\,\phi=\lambda\,\phi$ for some function $\phi$,
  \cite[Theorem 7.23]{Weidmann:1980}.
\end{remark}
\begin{remark}
This theorem covers, for example, piecewise continuous functions with
finite number of discontinuities. It does not cover all Riemann integrable
functions. For example, let $g(\Omega)=[0,1]$ and let $f$ be 
Thomae's function. Then $f$ is Riemann integrable, but
is not covered by this theorem since $K = [0,1] \cap \mathbb{Q}$ and
$\overline{K}=[0,1]$ \cite[Example 7.1.7]{Bartle+Sherbert:2011}.
\end{remark}
\begin{remark}
  The isomorphism  \eqref{eq:full-isomorphism} also follows from
  this theorem as $n\rightarrow \infty$
  for functions $g$ and $f$ that satisfy the conditions.
\end{remark}
The strong convergence also has nice addition and multiplication
properties.
\begin{theorem}
  Let $\mathsf{A}_n,\mathsf{B}_n,\mathsf{A},\mathsf{B}$ be bounded operators
  on a Hilbert space so that
    $\mathsf{A}_n \xrightarrow{s}\mathsf{A}$ and
    $\mathsf{B}_n \xrightarrow{s}\mathsf{B}$.
  Then
  \begin{align*}
    \mathsf{A}_n+\mathsf{B}_n &\xrightarrow{s}\mathsf{A}+\mathsf{B}, \\
    \mathsf{A}_n\,\mathsf{B}_n &\xrightarrow{s}\mathsf{A}\,\mathsf{B}.
  \end{align*}
\end{theorem}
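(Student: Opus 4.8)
The plan is to handle the two assertions separately: the statement for the sum is routine, while the product requires the standard ``add and subtract'' decomposition together with one nontrivial ingredient. I begin with the sum. Fixing an arbitrary vector $\phi$ in the Hilbert space, I write $((\mathsf{A}_n+\mathsf{B}_n)-(\mathsf{A}+\mathsf{B}))\,\phi=(\mathsf{A}_n-\mathsf{A})\,\phi+(\mathsf{B}_n-\mathsf{B})\,\phi$ and apply the triangle inequality, bounding the norm by $\|(\mathsf{A}_n-\mathsf{A})\,\phi\|+\|(\mathsf{B}_n-\mathsf{B})\,\phi\|$; both summands tend to $0$ by the assumed strong convergence, which proves $\mathsf{A}_n+\mathsf{B}_n\xrightarrow{s}\mathsf{A}+\mathsf{B}$.

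For the product, the key algebraic step is the decomposition
\[
  \mathsf{A}_n\,\mathsf{B}_n-\mathsf{A}\,\mathsf{B}
  =\mathsf{A}_n\,(\mathsf{B}_n-\mathsf{B})+(\mathsf{A}_n-\mathsf{A})\,\mathsf{B}.
\]
Applying this to a fixed $\phi$ and using the triangle inequality yields two terms. The second term equals $\|(\mathsf{A}_n-\mathsf{A})\,(\mathsf{B}\,\phi)\|$, which tends to $0$ because it is just the strong convergence of $\mathsf{A}_n$ tested against the single fixed vector $\mathsf{B}\,\phi$. The first term I would bound by $\|\mathsf{A}_n\,(\mathsf{B}_n-\mathsf{B})\,\phi\|\le\|\mathsf{A}_n\|\,\|(\mathsf{B}_n-\mathsf{B})\,\phi\|$, where $\|(\mathsf{B}_n-\mathsf{B})\,\phi\|\to0$.

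The main obstacle, and the only step beyond elementary estimates, is to control the operator norms $\|\mathsf{A}_n\|$ uniformly in $n$, since otherwise the vanishing factor $\|(\mathsf{B}_n-\mathsf{B})\,\phi\|$ could be defeated by growing norms. Strong convergence alone does not visibly supply a uniform bound, but it is precisely the conclusion of the uniform boundedness principle (Banach--Steinhaus theorem): since $\mathsf{A}_n\,\psi$ converges for every $\psi$, the family $(\|\mathsf{A}_n\,\psi\|)_n$ is bounded for each individual $\psi$, and completeness of the Hilbert space then forces $C:=\sup_n\|\mathsf{A}_n\|<\infty$. With this bound the first term is at most $C\,\|(\mathsf{B}_n-\mathsf{B})\,\phi\|\to0$, so both terms vanish and $\mathsf{A}_n\,\mathsf{B}_n\xrightarrow{s}\mathsf{A}\,\mathsf{B}$, completing the proof.
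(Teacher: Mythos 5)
Your proof is correct: the sum follows from the triangle inequality, and for the product the decomposition $\mathsf{A}_n\,\mathsf{B}_n-\mathsf{A}\,\mathsf{B}=\mathsf{A}_n\,(\mathsf{B}_n-\mathsf{B})+(\mathsf{A}_n-\mathsf{A})\,\mathsf{B}$ combined with the Banach--Steinhaus theorem (which legitimately yields $\sup_n\|\mathsf{A}_n\|<\infty$ from pointwise boundedness and completeness) is exactly the needed argument, including the correct observation that the uniform bound is the one non-elementary ingredient. The paper does not write out a proof at all --- it only cites textbook references (Kreyszig; Kato; Weidmann; Simon) --- and the standard proofs in those references are precisely the argument you gave, so your proposal simply supplies the details the paper delegates.
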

\begin{proof}
  For the sum, see \cite[Chapter 4.9, problem 2]{Kreyszig:1989}.
  For the product, see 
  \cite[Chapter III, Lemma 3.8]{Kato:1995},
  \cite[Exercise 4.20]{Weidmann:1980}, or 
  \cite[Chapter 2.1, problem 5]{Simon:2015}.
\end{proof}
Thus, for instance, by the properties
of the strong convergence, we can prove that
for functions
$f_1,f_2,f_3$ and $g_1,g_2$
satisfying conditions of Theorem~\ref{thm:convergence1}, so that, 
$f_1$ is continuous on the
eigenvalues of $\mathsf{M}[g_1]$, $f_2$ on eigenvalues of $\mathsf{M}[g_2]$,
and $f_3$ on eigenvalues of $\mathsf{M}[f_1(g_1)\,f_2(g_2)]$, we have
\begin{align*}
  f_3\left(
  \frac{\scriptstyle f_1(\boldsymbol{M}_n[g_1])\,f_2(\boldsymbol{M}_n[g_2])
    +
    f_2(\boldsymbol{M}_n[g_2])
    \,
    f_1(\boldsymbol{M}_n[g_1])}{2}
  \right)%
  \xrightarrow{s}
  \boldsymbol{M}_\infty[f_3(f_1(g_1)\,f_2(g_2))].
\end{align*}

Strong convergence implies \textit{weak convergence}, that is,
for any vectors $\phi,\psi$ in a Hilbert space we have
$  \langle\psi,\mathsf{A}_n\,\phi\rangle
  \rightarrow\langle \psi,\mathsf{A}\,\phi\rangle$.
Weak convergence also covers the convergence of the $(0,0)$ matrix
element to the integral by selecting $\phi=\psi=\boldsymbol{e}_0$ which gives the following.
\begin{theorem} \label{thm:convergence2}
Let $f$ and $g$ be as in Theorem~\ref{thm:convergence1}, then
as $n\rightarrow \infty$
\begin{equation*}
  [f(\boldsymbol{M}_n[g])]_{0,0}=
  \boldsymbol{e}_0^\top\,f(\boldsymbol{M}_n[g])\,\boldsymbol{e}_0
  \rightarrow 
  \int_\Omega f(g(\boldsymbol{x}))\,w(\boldsymbol{x})\,d\boldsymbol{x}.
\end{equation*}
\end{theorem}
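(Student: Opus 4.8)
The plan is to obtain the result as a direct consequence of the strong convergence already established in Theorem~\ref{thm:convergence1}, combined with the standard fact that strong convergence implies weak convergence and with the two identities \eqref{eq:innerproduct-integral} and \eqref{eq:1-element-isomorphism} that tie the $(0,0)$ matrix element to the integral. No new analytic estimate is required; the argument is purely a matter of assembling results that are already in place, exactly as anticipated in the sentence preceding the theorem.

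First I would invoke Theorem~\ref{thm:convergence1}, whose hypotheses on $f$ and $g$ are precisely the ones assumed here. Writing $\boldsymbol{B}_n$ for the block-diagonal matrix
\[
\boldsymbol{B}_n=
\left[
\begin{array}{ccc}
f(\boldsymbol{M}_n[g]) & 0 & \ldots\\
0 & f(0) & \ldots\\
\vdots & \vdots & \ddots
\end{array}
\right],
\]
the theorem gives $\boldsymbol{B}_n\xrightarrow{s}f(\boldsymbol{M}_\infty[g])$. Since strong convergence implies weak convergence, $\langle\psi,\boldsymbol{B}_n\,\phi\rangle\to\langle\psi,f(\boldsymbol{M}_\infty[g])\,\phi\rangle$ for every pair $\phi,\psi\in\ell^2$, and I would specialize to $\phi=\psi=\boldsymbol{e}_0$, which under \eqref{eq:basis-isomorphism} is the vector representing the constant basis function $\phi_0=1$.

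The key step is then to read off the two sides of this weak limit. On the approximating side, the block-diagonal structure of $\boldsymbol{B}_n$ means that $\boldsymbol{e}_0$ sees only the upper-left block, so $\boldsymbol{e}_0^\top\,\boldsymbol{B}_n\,\boldsymbol{e}_0=\boldsymbol{e}_0^\top\,f(\boldsymbol{M}_n[g])\,\boldsymbol{e}_0=[f(\boldsymbol{M}_n[g])]_{0,0}$; the zero off-diagonal blocks and the $f(0)$ tail never contribute. On the limit side, $\boldsymbol{e}_0^\top\,f(\boldsymbol{M}_\infty[g])\,\boldsymbol{e}_0$ equals $\langle 1, f(\mathsf{M}[g])\,1\rangle$ by \eqref{eq:1-element-isomorphism}, and this in turn equals $\int_\Omega f(g(\boldsymbol{x}))\,w(\boldsymbol{x})\,d\boldsymbol{x}$ by \eqref{eq:innerproduct-integral}. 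Concatenating these two identifications with the weak limit gives the claim.

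The one point I would be careful to state explicitly, rather than leave implicit, is that \eqref{eq:1-element-isomorphism} is a genuine \emph{equality} of real scalars and not merely an isomorphism, so that the limit of the $(0,0)$ entries is literally the integral. This is the only place where something could in principle go wrong, but because $g$ and $f$ are bounded the operators $\mathsf{M}[g]$ and $\mathsf{M}[f(g)]$ both admit infinite matrix representations, so the chain \eqref{eq:matrix-isomorphism}, \eqref{eq:full-isomorphism} and hence the equality \eqref{eq:1-element-isomorphism} all hold under the hypotheses inherited from Theorem~\ref{thm:convergence1}. Thus I do not expect any real obstacle; the difficulty, such as it is, lies entirely in Theorem~\ref{thm:convergence1}, on which this statement rests.
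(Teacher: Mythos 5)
Your proposal is correct and follows the paper's own argument exactly: the paper derives this theorem by noting that the strong convergence of Theorem~\ref{thm:convergence1} implies weak convergence, choosing $\phi=\psi=\boldsymbol{e}_0$, and identifying the limit with the integral via \eqref{eq:1-element-isomorphism} and \eqref{eq:innerproduct-integral}. Your additional care about \eqref{eq:1-element-isomorphism} being a genuine equality of real scalars matches the paper's own remark on that point, so there is nothing to add.
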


\section{Numerical results}
\label{sec:experiments}
As an example, we consider integration on interval $[0,1]$
with the weight function $w(x)=1$ and the system of
functions
\begin{equation*}
  1, x^{\frac{1}{3}}, x, x^{1+\frac{1}{3}}, x^2, x^{2+\frac{1}{3}}, \ldots,
  x^{n-1}, x^{n-1+\frac{1}{3}}
\end{equation*}
that were used in \cite{Ma+Rokhlin+Wandzura:1996}. We compare our method to generalized
Gaussian quadrature. For that method the functions determine the quadrature
rule so that it is exact for the $2\,n$ functions. For the
proposed matrix method the functions determine the quadrature so that
they span a subspace where the
multiplication operator is projected. Although the methods are not necessarily
directly comparable, their results can be expected to be close.

We use two different inner functions $g(x)=x$ and $g(x)=\sqrt[3]{x}$.
In the latter case, the nodes for integrating $f(x)$ are given
as $\lambda_i^3$. The first one gives an exact integral for function
$f(x)=x$ and the second one for $f(x)=\sqrt[3]{x}$.
The computations are performed using Matlab Symbolic Math
toolbox for computing $\boldsymbol{M}_n[g]$ as in \eqref{eq:non-orthonormal-matrix}
and the standard 64 bit IEEE 754
floating point numbers for the eigenvalue decomposition of matrix $\boldsymbol{M}_n[g]$.
The nodes and weights for the 5-point rules are shown in
Figure~\ref{fig:ex1-nodes-weights}.
The generalized quadrature points are taken from
\cite[Table 2]{Ma+Rokhlin+Wandzura:1996} where they
have been computed with the 128 bit Fortran (REAL*16) floating
point numbers and presented with 15 decimals.
\begin{figure}[htbp]
  \centering
  \includegraphics[width=\textwidth]{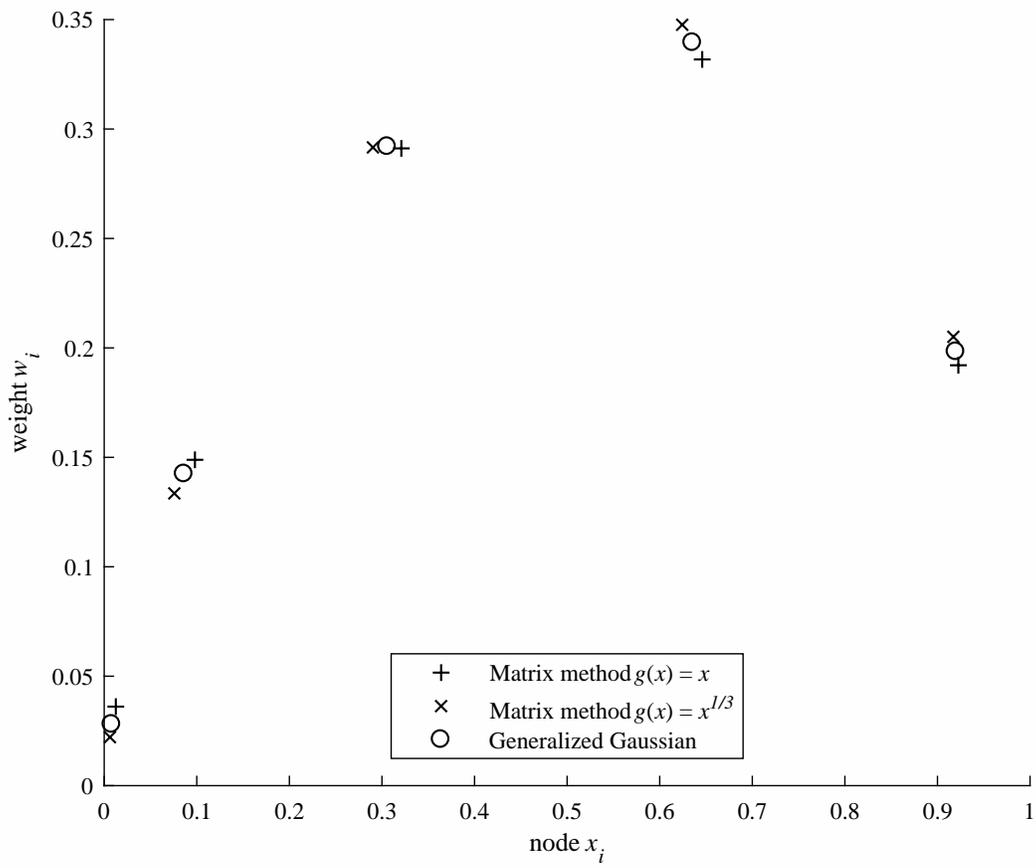}
  \caption{  \label{fig:ex1-nodes-weights}
    5-point quadrature rule nodes and weights.}
\end{figure}
We see from Figure~\ref{fig:ex1-nodes-weights}
that the three methods give nodes and weights that are
close to each other and that the generalized Gaussian quadrature nodes
and weights are located between the nodes and weights of the
proposed matrix methods. 

We compare the accuracy of the methods with a test function $f(x)=x^y$
where $y\in[0,6.5]$. Figure~\ref{fig:ex1-error5} shows the relative 
difference of
the exact solution to the quadrature
approximation with the three different 5-point rules.
\begin{figure}[htbp]
  \centering
  \includegraphics[scale=0.71]{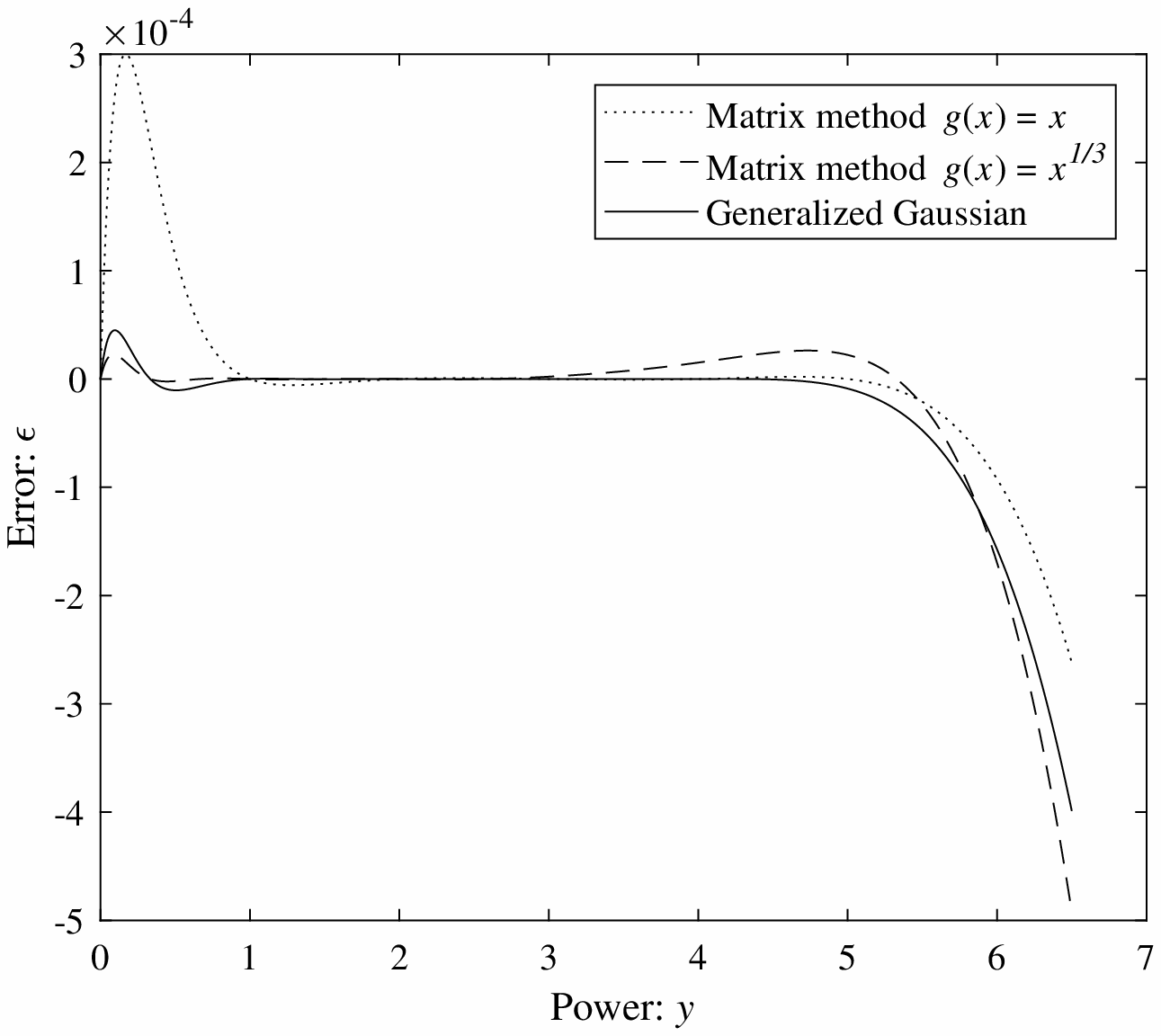}
  \caption{
    \label{fig:ex1-error5}
    Relative error
    $\epsilon=\frac{\sum\limits_{k=0}^4 w_k\,x_k^y}{\int_0^1 x^y\,dx}-1$ for the 5-point quadrature
 rules with $y\in [0,6.5]$.}
  \centering
  \includegraphics[scale=0.71]{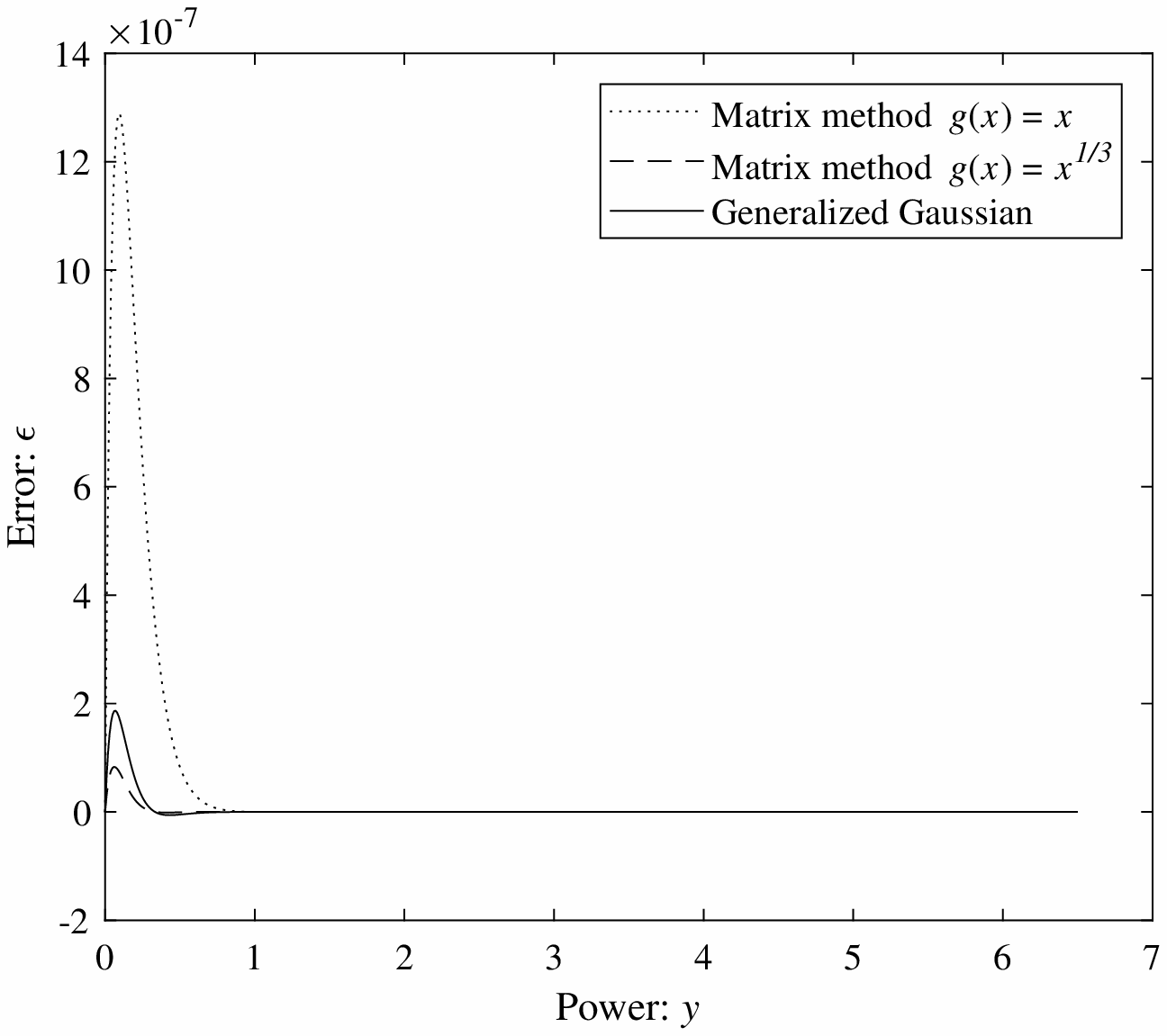}
  \caption{
    \label{fig:ex1-error20}
    Relative error
    $\epsilon=\frac{\sum\limits_{k=0}^{19} w_k\,x_k^y}{\int_0^1 x^y\,dx}-1$ for the
20-point quadrature rules with $y\in [0,6.5]$.}
\end{figure}
We see that the largest errors occur for the small powers and large powers
and, on those areas, the generalized Gaussian quadrature is between the matrix
method for $g(x)=x$ and $g(x)=\sqrt[3]{x}$. For small powers,
$g(x)=\sqrt[3]{x}$ gives the smallest error while for large powers it gives
the largest error.

In Figure~\ref{fig:ex1-error20}
we see the same comparison for the order 19 matrix method and the generalized
Gaussian quadrature
where the error is much smaller. Although not shown in
Figure~\ref{fig:ex1-error20},
the error of the three different methods behaves similarily for large
enough values of exponent $y$ as in Figure~\ref{fig:ex1-error5}.

The second integral example demonstrates the multiplicative nature
of the multiplication operator. The  integral is an integral of two variables
\begin{equation*}
  \int_0^1\int_0^1 e^{x\,y}\,\log(1+x+y)\,\,dx\,dy.
\end{equation*}
We select the linearly independent basis functions as
$\phi_0=1,\phi_1=x+y,\phi_2=x\,y,\phi_3=(x+y)^2,\phi_4=(x\,y)^2,\ldots,
\phi_{n-1}=(x+y)^k,\phi_{n}=(x\,y)^k$
and the inner functions for the multiplication operators
as $g_1=x\,y, g_2=x + y$. Then we can approximate the integral as
\begin{equation}
  \label{eq:int-ex-2}
  \int_0^1\int_0^1 e^{x\,y}\,\log(1+x+y)\,\,dx\,dy
  \approx
  \left[
    e^{\boldsymbol{M}_n[g_1]}\,\log(\boldsymbol{I}+\boldsymbol{M}_n[g_2])
    \right]_{0,0}.
\end{equation}
This approximation breaks the integral into two different numerical
approximations: one given by matrix
$\boldsymbol{M}_n[g_1]$ and the other by $\boldsymbol{M}_n[g_2]$.
These two matrix approximations are used for computing
approximations of multiplication operators for two
different functions: $e^{g_1}$
and $\log(1+g_2)$ respectively. The final integral approximation uses
also other matrix elements of the two multiplication operator approximations
besides the $(0,0)$ element as
\begin{equation*}
  \left[
    e^{\boldsymbol{M}_n[g_1]}\,\log(\boldsymbol{I}+\boldsymbol{M}_n[g_2])
    \right]_{0,0} =
  \sum_{k=0}^n
    \left[
    e^{\boldsymbol{M}_n[g_1]}
    \right]_{0,k} \,
      \left[
        \log(\boldsymbol{I}+\boldsymbol{M}_n[g_2])
    \right]_{k,0}.
\end{equation*}
Again, the matrix approximation of the multiplication operator
in terms of orthonormalized functions is computed symbolically.
Then we compute the approximation of the integral as a product
of the Matlab matrix functions in the 64 bit IEEE 754 floating point.
The results of the approximations are
shown in Table~\ref{tbl:int-ex-2} along with an error estimate. The error
estimate is based on a numerical approximation of the integral
with Matlab function quad2d. The error bound of the quad2d approximation
is $9.7553\cdot 10^{-12}$. 
\begin{table}[htbp]
  \caption{Numerical value of approximation \eqref{eq:int-ex-2}
    for $n+1$ linearly independent functions
    $\phi_0,\phi_1,\ldots,\phi_n$.}
  \label{tbl:int-ex-2}
  \begin{tabular}{|c|c|c|c|}
    \hline
    $n$  & $\phi_n$   & approximation & error \\
    \hline
    0 & $1$ & 8.900185973444169E-01 & 5.259050963568923E-02 \\
    1 & $x + y$ & 9.382241645325552E-01 & 4.384942447550944E-03 \\
    2 & $x \, y$ & 9.424586790473777E-01 & 1.504279327284586E-04 \\
    3 & $(x + y)^2$ & 9.424599771307293E-01 & 1.491298493768722E-04 \\
    4 & $x^2 \, y^2$ & 9.426178212955950E-01 & -8.714315488878022E-06 \\
    5 & $(x + y)^3$ & 9.426129095676246E-01 & -3.802587518419998E-06 \\
    6 & $x^3 \, y^3$ & 9.426094920018954E-01 & -3.850217892287233E-07 \\
    7 & $(x + y)^4$ & 9.426091679299925E-01 & -6.094988636018428E-08 \\
    8 & $x^4 \, y^4$ & 9.426091298353442E-01 & -2.285523803546852E-08 \\
    9 & $(x + y)^5$ & 9.426091128176409E-01 & -5.837534788888377E-09 \\
    10 & $x^5 \, y^5$ & 9.426091104398910E-01 & -3.459784903014906E-09 \\
    11 & $(x + y)^6$ & 9.426091075431513E-01 & -5.630451660465496E-10 \\
    12 & $x^6 \, y^6$ & 9.426091077121457E-01 & -7.320395400967072E-10 \\
    13 & $(x + y)^7$ & 9.426091069749081E-01 & 5.198064201294983E-12 \\
    14 & $x^7 \, y^7$ & 9.426091070047423E-01 & -2.463618198333961E-11 \\
    15 & $(x + y)^8$ & 9.426091069592208E-01 & 2.088529349464352E-11 \\
    16 & $x^8 \, y^8$ & 9.426091069628073E-01 & 1.729882903589441E-11 \\
    17 & $(x + y)^9$ & 9.426091069786899E-01 & 1.416200490211850E-12 \\
    18 & $x^9 \, y^9$ & 9.426091069789710E-01 & 1.135092020376760E-12 \\
    \hline
  \end{tabular}
\end{table}
We can see from the table that with about 15 or more basis functions,
the numerical integral converges to
value of about $0.942609107$ with an absolute error of less than $10^{-10}$.

\section{Conclusions}
\label{sec:conclusions}
We have introduced a method for numerically approximating integrals
as a matrix function of a matrix approximation of a multiplication
operator. We have also shown that the new method is a generalization
of Gaussian quadrature and that the new quadrature
method has similar properties as Gaussian quadrature. Additionally,
the convergence was proved for bounded functions.
The new method was numerically demonstrated in two examples.

\section*{Acknowledgements}
We thank the anonymous reviewers for valuable comments and
Toni Karvonen for assistance with generalized Gaussian quadrature.
The work was supported by Academy of Finland.
\bibliographystyle{elsarticle-num} 
\bibliography{references}

\end{document}